\documentclass[12pt]{article}
\usepackage{amsmath}
\usepackage{amsfonts}
\usepackage{amsthm}
\usepackage{amssymb, setspace}
\usepackage{color,graphicx,epsfig,geometry,hyperref,fancyhdr}
\usepackage[T1]{fontenc}
\usepackage{float}
\usepackage{subfig}
\usepackage{bbm}
\usepackage{mathrsfs,fleqn}
\newtheorem{theorem}{Theorem}[section]

\newtheorem{lemma}{Lemma}[section]

\newtheorem{remark}{Remark}[section]

\newcommand{\Z}{\mathbb{Z}}

\newcommand{\R}{\mathbb{R}}
\newcommand{\C}{\mathbb{C}}

\newcommand{\grad}{\nabla}

\newcommand{\ep}{\varepsilon}

\graphicspath{{pics/}}

\begin{document}

\begin{flushleft}
\Large 
\noindent{\bf \Large Direct sampling for recovering sound soft scatterers from point source measurements }
\end{flushleft}

\vspace{0.2in}

{\bf  \large Isaac Harris}\\
\indent {\small Department of Mathematics, Purdue University, West Lafayette, IN 47907 }\\
\indent {\small Email: \texttt{harri814@purdue.edu}}\\


\begin{abstract}
\noindent In this paper, we consider the inverse problem of recovering a sound soft scatterer from the measured scattered field. The scattered field is assumed to be induced by a point source on a curve/surface that is known. Here we will propose and analyze new direct sampling methods for this problem. The first method we consider uses a far-field transformation of the near-field data which will allow us to derives explicit bounds in the resolution analysis for the direct sampling method's imaging functional. Two direct sampling methods will be studied using the far-field transformation. For these imaging functionals we will use the Funk-Hecke identities to study the resolution analysis.We will also study a direct sampling method for the case of the given Cauchy data.  Numerical examples are given to show the applicability of the new imaging functionals for recovering a sound soft scatterer in 2D.
\end{abstract}

\noindent {\bf Keywords}:  Direct Sampling Method $\cdot$ Sound Soft Scatterer $\cdot$ Factorization Method   \\

\noindent {\bf MSC}:  35J05, 35Q81, 46C07

\section{Introduction}

Here we develop new direct sampling methods for recovering a sound soft scatterer from the measured scattered field induced by point sources. Direct (also referred to as Orthogonality) sampling methods are qualitative reconstruction methods that have gained interest recently by researchers. These types of reconstruction algorithms where first introduced in \cite{CK}. Just as other qualitative reconstruction methods the direct sampling method requires little a priori information about the specific physical parameters of the scatterer. This implies that these methods are robust in the fact that they recover multiple types of scatterers using the same algorithm(see for e.g. \cite{DSMHarris,postdocpaper,Liu}). Therefore, these methods can be advantageous to use in applications such as non-destructive testing and medical imaging. Similar methods have also been studied in diffuse optical tomography \cite{DSMdot}, electrical impedance tomography \cite{DSMeit} and thermodynamics \cite{DSMheat}. These methods have been studied in detail for far-field data but little has been done for the case of near-field data. In this paper, we will develop and analyze some direct sampling method given near-field measurements.

The main idea behind the methodology of qualitative methods is to develop an imaging functional using the measured data that is positive in the region that you wish to recover and (approximately) zero outside the region. All qualitative methods achieve this in various ways(see for e.g. \cite{TE-book,kirschbook}). One of the main advantages of direct sampling methods is the fact that the imaging functional is usually given by an inner-product(or norm) of the data operator and a specifically chosen function. This implies that these imaging functionals are simple to compute as well as stable with respect to noise in the scattering data. The main analytical tool for studying these methods comes from the factorization of the data operator just as in the factorization method(see for e.g. \cite{FM-wave,FMheat,firstFM}).

The main idea of this paper is to preform a far-field transformation of the near-field operator. This will completely transform the operator to the corresponding far-field operator for the scattering problem. This has the advantage that we can then use the theory already developed in the literature for the far-field operator as well as avoid using the Helmholtz-Kirchhoff identity which is often used in reverse time migration \cite{rtm}. Reverse time migration is very similar to the direct sampling method but in the case of near-field measurements the the Helmholtz-Kirchhoff identity does not provide explicit decay rates for the resolution analysis. For the case of far-field measurements one uses the Funk-Hecke identity. This gives explicit bounds on the imaging functionals as $\mathrm{dist}(z,D) \to \infty$ where $D$ denotes the unknown scatterer to be recovered and $z \in \R^d$ is sampling point where we evaluate the imaging functional. Using the asymptotic bounds on the Bessel functions given in the Funk-Hecke identity we can have theoretical limits on the value of the imaging functional outside the scatterer. We also study a new direct sampling method which uses the measured Cauchy data of the scattered field from point sources.

The rest of the paper is organized as follows. In the next section, we will rigorously describe the scattering problem under consideration as well as derive a factorization for the corresponding near-field operator. The factorization for the near-field operator is critical in the development of the new direct sampling methods. Next, we derive two new direct sampling methods where we use a far-field transformation of the near-field operator. This is done in order to avoid using the Helmholtz-Kirchhoff identity which is often used in reverse time migration. We will also study a direct sampling imaging functional that uses the near-field Cauchy data. For proof of concept we will provide numerical examples for the three imaging functionals studied for both full and partial aperture data. 

\section{Analysis of the scattering problem}\label{sect:dsm}
 In this section, we will derive a factorization of the near-field operator that will be used to analyze the new direct sampling methods imaging functionals for recovering a sound soft scatterer from the measured scattered field. To this end, we begin by formulating the direct time-harmonic scattering problem under consideration. The scattered field denoted by $u^s( \cdot \, , y)$ is induced by a point source incident field $u^i( \cdot \, , y)=\Phi( \cdot \, , y)$. Here we let $y$ denotes the location of the point source located on the curves/surface $\Gamma$ and $\Phi( x , y)$ is the radiating fundamental solution to Helmholtz equation given by 
$$\Phi(x ,y)= \left\{\begin{array}{lr} \frac{\text{i}}{4} H^{(1)}_0 (k | x -y |) \, \, & \, \text{in} \, \, \R^2 \\
 				&  \\
 \frac{1}{4 \pi} \frac{\text{exp}({ \text{i} k | x -y |}) }{| x -y |}  & \,  \text{in} \,\,   \R^3 ,
 \end{array} \right. $$  
for $x \neq y$ where $H^{(1)}_0$ is the first kind Hankel function of order zero. Throughout the paper, we will use boundary integral operators in our analysis so we will assume that $\Gamma$ is a class $\mathcal{C}^2$--smooth closed curves/surface.

Now, let $D \subset \R^d$ (for $d=2,3$) be the sound soft scattering obstacle(possibly with multiple components). We will assume that the boundary $\partial D$ is a class $\mathcal{C}^2$--smooth closed curve/surface where the exterior $\R^d \setminus\overline{D}$ is connected. Therefore, the radiating time-harmonic scattered field $u^s( x , y) \in H^1_{\text{loc}} (\R^d \setminus\overline{D})$ given by the point source incident field is the unique solution to(see for e.g. \cite{CK3})
 \begin{eqnarray}
&&\Delta_x u^s +k^2  u^s=0\,\,  \textrm{ in } \,\, \R^d \setminus\overline{D} \quad \textrm{ and } \quad u^s( \cdot \, , y) = -\Phi( \cdot \, , y) \,\, \textrm{ on } \,\, \partial D \label{scalarprob} \\
&&{\partial_r u^s} - \text{i} ku^s =\mathcal{O} \left( \frac{1}{ r^{(d+1)/2} }\right) \quad \text{ as } \quad r \rightarrow \infty \label{src}
\end{eqnarray} 
where $r=|x|$ with $k$ being the positive wave number. Here we assume that $k^2$ is not a corresponding Dirichlet eigenvalue for the negative Laplacian in $D$.  The Sommerfeld radiation condition given by \eqref{src} is satisfied uniformly in all directions. This gives that we can assume that we have the measured scattering data $u^s(x,y)$ for all $x,y \in \Gamma$ provided that $\text{dist}(\Gamma , D)>0$. Therefore, we now define the so-called near-field operator 
$${N} : L^2(\Gamma) \longmapsto L^2(\Gamma) \quad  \text{given by} \quad  ({N}g)(x) = \int_{\Gamma} u^s(x,y) g(y) \, \text{d}s(y).$$ 
In order to study the {\bf inverse problem} of reconstructing the sound soft scattering obstacle $D$ given the near-field measurements we need to derive a suitable factorization for the near-field operator. In \cite{nf-fm} a factorization of the near-field operator was studied but for our purposes we need to derive a different factorization.

From the direct scattering problem \eqref{scalarprob}--\eqref{src} we make the ansatz that the scattered field can be represented by the boundary integral operator $SL_{\partial D}: H^{-1/2}(\partial D) \longrightarrow H^1_{\text{loc}} (\R^d \setminus\overline{D})$ 
\begin{align}
u^s( \cdot \, , y) = \big(SL_{\partial D}\big) \varphi_y \quad \text{where } \quad \big(SL_{\partial D}\big) \varphi_y= \int_{\partial D} \Phi( \cdot \, ,  \omega ) \varphi_y(\omega) \, \text{d}s(\omega) \label{BIO}
\end{align}
for some $\varphi_y \in H^{-1/2}(\partial D)$. See \cite{mclean} for the mapping properties of the boundary integral operator $SL_{\partial D}$. Therefore, we have that  $\varphi_y$  satisfies the equation
$$\big(S_{\partial D \to \partial D} \big)\varphi_y = -  \Phi( \cdot \, , y) \quad \text{ for any fixed } \,\, y \in C$$
where $S_{\partial D \to \partial D}:H^{-1/2}(\partial D) \longrightarrow H^{1/2}(\partial D)$ is given by 
$$\big(S_{\partial D \to \partial D} \big) \varphi_y =\int_{\partial D} \Phi( \cdot \, ,  \omega) \varphi_y(\omega) \, \text{d}s(\omega) \Big|_{\partial D}.$$
Note that we have used the continuity of the trace for the boundary integral operator $SL_{\partial D}$ on the boundary $\partial D$ (see for e.g. \cite{mclean}). Since $k^2$ is not a Dirichlet eigenvalue of the negative Laplacian in $D$ Lemma 1.14 of \cite{kirschbook} gives that $S_{\partial D \to \partial D}$ has a bounded inverse. We will define the bounded linear operator 
\begin{align}
T:H^{1/2}(\partial D) \longrightarrow H^{-1/2}(\partial D) \quad \text{such that} \quad T= -S^{-1}_{\partial D \to \partial D}. \label{T-def}
\end{align}
This implies that the scattered field has the representation
\begin{align}
u^s( \cdot \, , y) = \int_{\partial D} \Phi( \cdot \, ,  \omega) \big[ T \Phi( \cdot \, , y) \big](\omega) \, \text{d}s(\omega). \label{us-rep}
\end{align}
Equation \eqref{us-rep} gives us an analytical solution to the direct scattering problem using boundary integral operators. One can view \eqref{us-rep} as a stand in for the Lippmann-Schwinger integral representation of the scattered field that one obtains by considering a penetrable scatterer(see for e.g. equation (8.13) of \cite{CK3}). From this, we will derive a factorization of the near-field operator. To this end, define the bounded linear operator 
\begin{align}\label{sl-op1}
S : L^2(\Gamma)  \longmapsto L^2(\partial D) \quad  \text{given by} \quad   S g = \int_{\Gamma}  \Phi( \cdot \, , y)  g(y) \, \text{d}s(y) \Big|_{\partial D} 
\end{align}
along with it's dual-operator 
\begin{align}\label{sl-op2}
S^{\top} : L^2(\partial D)  \longmapsto L^2(\Gamma) \quad  \text{given by} \quad S^{\top} \varphi = \int_{\partial D}  \Phi( \omega , \cdot)  \varphi(\omega) \, \text{d}s(\omega) \Big|_{\Gamma}
\end{align}
with respect to the bilinear $L^2$ dual-product $\langle  \cdot \, , \cdot  \rangle_{L^2}$ such that 
$$\langle  \varphi , Sg  \rangle_{L^2(\partial D)} =\langle  S^{\top} \varphi , g  \rangle_{L^2(\Gamma)} \quad \text{for all } \quad g\in L^2(\Gamma) \textrm{ and } \varphi \in L^2(\partial D).$$
The single layer potential operators $S$ and $S^{\top}$ defined above are commonly used in studying problems in scattering theory. We can now use the operators define above to factorize the data operator $N$.  By superposition, we have that 
$$w(x) = \int_{\Gamma} u^s(x , y) g(y) \text{d}s(y) \quad  \forall\, x \in \R^d \setminus\overline{D}$$
is the scattered field corresponding to \eqref{scalarprob}--\eqref{src}  when the point source incident field is replaced by $Sg$ for some $g \in L^2(\Gamma)$ given by \eqref{sl-op1}. Now, appealing to the representation of the of the scattered field \eqref{us-rep} we have that 
$$w(x) = \int_{\partial D} \Phi( x  ,  \omega) \big[ T Sg \big](\omega) \, \text{d}s(\omega) \quad  \forall\, x \in \R^d \setminus\overline{D}.$$ 
This implies that 
\begin{align}
Ng= w|_{\Gamma}  = S^{\top} \, T \, S g \quad \text{for all } \quad g \in L^2(\Gamma) \label{N-fac}
\end{align}   
by the above definition of the operators in \eqref{sl-op1} and \eqref{sl-op2}. Note that the Range$(S) \subset H^{1/2}(\partial D)$ and $S^\top: H^{-1/2}(\partial D) \longmapsto H^{1/2}(\Gamma)$ by the mapping properties in \cite{mclean}. The factorization of near-field operator \eqref{N-fac} as well as the representation formula \eqref{us-rep} for the scattered field will be instrumental in studying the resolution analysis for the direct sampling imaging functionals presented in the following sections.

\section{Direct Sampling via Far Field Transform}\label{sect:dsm-fft}
Now that we have the factorization of the near-field data operator given in \eqref{N-fac} we wish to develop new direct sampling methods. To this end, we will use a far-field transformation to the near-field  operator $N$. We will mainly focus on the two dimensional case where as the three dimensional can can be handled similarly. The main advantage is that when considering the direct sampling method for far-field data(see for e.g. \cite{DSMHarris,Liu}) we have that the resolution analysis can easily be obtained by the Funk-Hecke integral identities. Then the resolution can be derived by the asymptotic decay of the Bessel functions. When considering reverse time migration for near-field data(see for e.g. \cite{rtm}) the analysis uses the Helmholtz-Kirchhoff integral identity which does not have an explicit decaying first order terms. This means that analytically one must take the sources/receivers far away from the intended target. Recall, the Funk-Hecke integral identities in \cite{Liu}, are given by 
\begin{equation}\label{F-H-id}
\int_{\mathbb{S}^{d-1}}  \mathrm{e}^{- \mathrm{i} k {(z - x)}  \cdot \hat{y} } \, \mathrm{d}s(\hat{y} ) =\left\{\begin{array}{lr} 2\pi J_0(k | x - z|) \, \, & \quad \text{if} \quad d = 2,\\
 				&  \\
4\pi j_0(k | x - z|) & \quad \text{if} \quad d = 3 
 \end{array} \right. 
\end{equation}
and when $x\neq z$
\begin{equation}\label{F-H-id2}
\int_{\mathbb{S}^{d-1}} \hat{y} \mathrm{e}^{- \mathrm{i} k {(z - x)}  \cdot \hat{y} } \, \mathrm{d}s(\hat{y} ) = \left\{\begin{array}{lr} 2\pi \frac{(x-z)}{\text{i} |x-z|} J_1(k | x - z|) \, \, & \quad \text{if} \quad d = 2,\\
 				&  \\
4\pi \frac{(x-z)}{\text{i} |x-z|} j_1(k | x - z|)  & \quad \text{if} \quad d = 3 
 \end{array} \right. 
\end{equation}
where $\mathbb{S}^{d-1}$ is the unit circle for $d=2$ or unit sphere for $d=3$ i.e. $\mathbb{S}^{d-1}=\{p \in \R^d \, :\, |p|=1\}$. We will make use of the decay of the Bessel functions i.e. 
$$J_0(t) = \frac{\cos t + \sin t}{\sqrt{\pi t}}  \left\{1 + \mathcal{O} \left( \frac{1}{t}\right) \right\} \quad \text{ and } \quad J_1(t) = \frac{\cos t - \sin t}{\sqrt{\pi t}}  \left\{-1 + \mathcal{O} \left( \frac{1}{t}\right) \right\}$$ 
for $d=2$ where as  
$$j_0(t) = \frac{ \sin t}{t}  \left\{1 + \mathcal{O} \left( \frac{1}{t}\right) \right\} \quad \text{ and } \quad j_1(t) = \frac{\cos t}{t}  \left\{-1 + \mathcal{O} \left( \frac{1}{t}\right) \right\}$$ 
for $d=3$ as $t \to \infty$(see for e.g. \cite{Liu}). 

In order to proceed, we need to define the Dirichlet-to-Far-Field Transformation just as in \cite{postdocpaper}. This mapping, takes the Dirichlet data of the radiating exterior Helmholtz equation in the exterior of $\text{Int}(\Gamma)$ to the corresponding far-field pattern. Here, we let $\text{Int}(\Gamma)$ denotes the region inclosed by the collection curve $\Gamma$. Therefore, we have that the Dirichlet-to-Far-Field Transformation $\mathcal{Q}: H^{1/2}(\Gamma) \longrightarrow L^2(\mathbb{S}^{d-1})$ is given by  
\begin{align}
(\mathcal{Q} f)(\hat{x}) = v^{\infty}(\hat{x}), \quad \forall \; \hat{x} \in \mathbb{S}^{d-1} \label{Q-operator}
\end{align}
where $v\in H^1_{\text{loc}}(\R^d \setminus \overline{\text{Int}(\Gamma)})$ is the unique solution to
\begin{align}
\Delta v +k^2 v  =  0 \quad \text{in}  \quad \R^d \setminus \overline{\text{Int}(\Gamma)} \quad \text{ with } \quad v|_\Gamma = f \label{eq-ext1} \\
{\partial_r v} - \text{i} k v =\mathcal{O} \left( \frac{1}{ r^{(d+1)/2} }\right) \quad \text{ as } \quad r \rightarrow \infty.  \label{eq-ext2}
\end{align} 
Now, we define the far-field pattern $v^{\infty}$ where $v$ has the expansion 
$$v(x)= \gamma  \frac{\text{e}^{\text{i}k|x|}}{|x|^{(d-1)/2}} \left\{v^{\infty}(\hat{x}) + \mathcal{O} \left( \frac{1}{|x|}\right) \right\}\; \textrm{  as  } \;  |x| \to \infty$$
where $\hat x:=x/|x|$ (see Chapter 1 of \cite{kirschbook}).  Here the constant 
$$\gamma = \frac{ \mathrm{e}^{\mathrm{i}\pi/4} }{ \sqrt{8 \pi k} } \,\,\, \text{in} \,\,\, \R^2 \quad \text{and} \quad  \gamma = \frac{1}{ 4\pi } \,\,\, \text{in} \,\,\, \R^3. $$
This operator was used in \cite{postdocpaper} to derive a direct sampling method for both isotropic and anisotropic scatterers. In this section, we will see that this can be extended to the case of sound soft scatterers. Also, the operator $\mathcal{Q}$ can be constructed without a priori knowledge of $D$. If $\Gamma$ is a ball centered at the origin with radius $R>0$ i.e. $\Gamma=\partial B(0;R)$ then $\mathcal{Q}$ has an explicit formula via separation of variables given by 
$$ (\mathcal{Q} f)(\theta) = \int\limits_0^{2\pi} Q (\theta, \phi) f(\phi) \text{d}{\phi} \quad \text{where} \quad Q (\theta, \phi) = \frac{ (1-\text{i}) }{2 \pi \sqrt{\pi k}} \sum_{|m| = 0}^{\infty} \frac{\text{e}^{\text{i}m(\theta - \phi - \pi/2)}}{H^{(1)}_{m}(kR)}$$
with $f(\phi) :=f\big( R (\cos\phi \, , \, \sin \phi) \big)$, see Section 2 of \cite{postdocpaper}  for details. For our numerical experiments we will truncate the series to approximate the operator $\mathcal{Q}$, which converges geometrically in the operator norm(see \cite{postdocpaper}).

\begin{remark}
When $\Gamma \neq \partial B(0;R)$ we can define Dirichlet-to-Far-Field Transformation $\mathcal{Q}$ by using boundary integral equations. See Section 2 of \cite{kirschbook} for a detailed construction. 
\end{remark}

We will now show that the near-field operator $N$ for a sound soft scatterer can be transformed into the far-field operator(see Chapter 2 \cite{kirschbook}) using the operator $\mathcal{Q}$. Once we have shown this, we can employ similar analysis of the  direct sampling methods studied in \cite{DSMHarris}. Recall, that the single layer potential 
$$ v(x)= \int_{\partial D}\Phi(x , \omega) \varphi(\omega)  \text{d}s(\omega) \quad  \textrm{for any } \quad \varphi \in L^2(\partial D) $$
satisfies \eqref{eq-ext1}--\eqref{eq-ext2} with $f= S^{\top} \varphi$ as defined in \eqref{sl-op2}. Note, that by the mapping properties of the single layer potential(see Chapter 6 of \cite{mclean}) we have that the range of $S^\top$ is a subset of $H^{1/2}(\Gamma)$. Therefore, by the asymptotic expansion of the fundamental solution(see for e.g. \cite{CK3}) we obtain the relationship
\begin{align}
(\mathcal{Q} \, S^{\top} \varphi) (\hat{x}) = \int_{\partial D} \text{e}^{-\text{i}k \hat{x}\cdot \omega} \varphi(\omega)   \text{d}s(\omega)  \quad  \textrm{for any } \quad \varphi \in L^2(\partial D). \label{qs-relation}
\end{align} 
Now, motivated by \eqref{qs-relation} we define the Trace of the Herglotz wave function on the boundary $\partial D$ as the bounded linear operator $H: L^2( \mathbb{S}^{d-1}) \longrightarrow L^2( \partial D)$ given by 
$$(H g)(\omega) = \int_{\mathbb{S}^{d-1}} \text{e}^{\text{i}k  \omega \cdot \hat{x}} g(\hat{x}) \mathrm{d} s(\hat{x}) \Big|_{\partial D}  \quad  \textrm{for any } \quad g \in L^2(\mathbb{S}^{d-1}).$$
Now, we recall the adjoint operator $H^*: L^2( \partial D ) \longrightarrow L^2(\mathbb{S}^{d-1})$ such that 
$$(Hg , \varphi)_{ L^2(\partial D)} = (g , H^* \varphi)_{L^2( \mathbb{S}^{d-1})} \quad \text{for all } \quad g\in L^2( \mathbb{S}^{d-1}) \textrm{ and } \varphi \in L^2(\partial D)$$
which is given by 
$$ (H^* \varphi) (\hat{x}) =  \int_{\partial D} \text{e}^{-\text{i}k \hat{x}\cdot \omega} \varphi(\omega)   \text{d}s(\omega)  \quad  \textrm{for any } \quad \varphi \in L^2(\partial D)$$ 
(see for e.g. \cite{kirschbook} Chapter 2). Therefore, by appealing to \eqref{qs-relation} we have that 
$$\mathcal{Q} \, S^{\top} \varphi = H^* \varphi \quad \textrm{for any } \quad \varphi \in L^2(D).$$
From this, we have that dual-operator of $(H^*)^\top : L^2( \mathbb{S}^{d-1}) \longrightarrow L^2( \partial D)$ satisfies 
$$S\mathcal{Q}^\top = (H^*)^\top \quad  \textrm{ where the operator} \quad \left( (H^*)^\top g \right) (\omega) := \int_{\mathbb{S}^{d-1}} \text{e}^{- \text{i}k  \omega \cdot \hat{x}} g(\hat{x}) \mathrm{d} s(\hat{x}) \Big|_{\partial D}$$ 
for all $g \in L^2(\mathbb{S}^{d-1})$(see for e.g. Chapter 2 of \cite{Brezis}). To continue, we notice that 
\begin{align*}
\left( (H^*)^\top g \right) (\omega) &= \int_{\mathbb{S}^{d-1}} \text{e}^{- \text{i}k  \omega \cdot \hat{x}} g(\hat{x}) \mathrm{d} s(\hat{x}) \Big|_{\partial D}\\
						   &= \int_{\mathbb{S}^{d-1}} \text{e}^{\text{i}k  \omega \cdot \hat{x}} g(-\hat{x}) \mathrm{d} s(\hat{x}) \Big|_{\partial D}\\
						   &=(H\mathcal{R} g)(\omega)
\end{align*}
where the operator $\mathcal{R}:  L^2( \mathbb{S}^{d-1}) \longrightarrow L^2(\mathbb{S}^{d-1})$ is given by $(\mathcal{R}g)(\hat{x}) = g(-\hat{x})$. It is clear that $\mathcal{R}$ is a bounded linear operator and $\mathcal{R} = \mathcal{R}^{-1}$. By the definition of the operators $\mathcal{Q}$ and $\mathcal{R}$ we can conclude that 
\begin{align} \label{FFT}
\mathcal{Q} N \mathcal{Q}^\top \mathcal{R} =H^*TH \quad \text{ where } \quad \mathcal{Q} N \mathcal{Q}^\top \mathcal{R} :  L^2( \mathbb{S}^{d-1}) \longrightarrow L^2(\mathbb{S}^{d-1})
\end{align}
by appealing to the factorization in \eqref{N-fac}. Note, by equation (1.55) in \cite{kirschbook} we can conclude that $\mathcal{Q} N \mathcal{Q}^\top \mathcal{R}$ corresponds to the far-field operator for the scattering problem \eqref{scalarprob}--\eqref{src} where the incident field is a plane wave. 

{\bf The imaging functional via Far-Field transform:} We now have all we need to define two new imaging functionals via for the transformed operator $\mathcal{Q} N \mathcal{Q}^\top \mathcal{R}$. For each sampling point $z \in \R^d$ the imaging functional via the far-field transform is given by 
\begin{align}
W_{\text{FF}}(z) = \Big|\Big(\mathcal{Q} N \mathcal{Q}^\top \mathcal{R} \,  \phi_z, \phi_z \Big)_{L^2(\mathbb{S}^{d-1})}\Big| \quad  \textrm{ where } \quad  \phi_z(\hat{x}) = \text{e}^{-\text{i}k z \cdot \hat{x}}. \label{dsm1}
\end{align} 
Due to the fact that $\mathcal{Q} N \mathcal{Q}^\top \mathcal{R}$ transforms the near-field operator $N$ for the scattering problem \eqref{scalarprob}--\eqref{src} to the corresponding far-field operator we can appeal to the results in \cite{DSMHarris}. 

\begin{theorem}\label{decay1}
Let the imaging functional $W_{\text{FF}}(z)$ be as defined by \eqref{dsm1}. Then for any sampling point $z \in \R^d \setminus \overline{D}$ we have that  
$$W_{\text{FF}}(z)  =  \mathcal{O} \left(\mathrm{dist}(z,D)^{1-d}\right)  \quad \text{ as } \quad\mathrm{dist}(z,D) \to \infty.$$
\end{theorem}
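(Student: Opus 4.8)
The plan is to exploit the identity $\mathcal{Q} N \mathcal{Q}^\top \mathcal{R} = H^* T H$ established in \eqref{FFT}, which collapses the imaging functional to a quantity living entirely on $\partial D$. First I would move $H^*$ across the inner product via its defining adjoint relation to obtain
$$W_{\text{FF}}(z) = \Big| \big( T H \phi_z \, , \, H \phi_z \big)_{L^2(\partial D)} \Big|,$$
where the pairing on the right is read as the duality pairing between $H^{-1/2}(\partial D)$ and $H^{1/2}(\partial D)$ extending the $L^2$ inner product, since $T$ maps into $H^{-1/2}(\partial D)$ by \eqref{T-def}. This isolates the role of the test function: all the $z$-dependence is now funneled through the single boundary function $H\phi_z$, and since the transformed operator is precisely the far-field operator, the argument parallels the far-field direct sampling analysis.

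The central computation is to evaluate $H\phi_z$ in closed form. Substituting $\phi_z(\hat{x}) = \mathrm{e}^{-\mathrm{i}kz\cdot\hat{x}}$ into the definition of $H$ gives
$$(H\phi_z)(\omega) = \int_{\mathbb{S}^{d-1}} \mathrm{e}^{\mathrm{i}k(\omega - z)\cdot\hat{x}}\, \mathrm{d}s(\hat{x}) \Big|_{\partial D},$$
which is exactly the Funk--Hecke integral \eqref{F-H-id} with $x$ replaced by $\omega$. Hence $(H\phi_z)(\omega) = 2\pi J_0(k|\omega - z|)$ for $d=2$ and $4\pi j_0(k|\omega - z|)$ for $d=3$. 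Because $z \in \R^d \setminus \overline{D}$, we have $|\omega - z| \ge \mathrm{dist}(z,D)$ uniformly for $\omega \in \partial D$, so the asymptotic decay of the Bessel functions recalled above yields $|(H\phi_z)(\omega)| = \mathcal{O}\big(\mathrm{dist}(z,D)^{-(d-1)/2}\big)$ uniformly on $\partial D$.

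To bound the duality pairing I need the full $H^{1/2}(\partial D)$ norm of $H\phi_z$, not merely its $L^2$ norm, which I would obtain by interpolation between $L^2$ and $H^1$. Differentiating with $J_0' = -J_1$ (and $j_0' = -j_1$) and the chain rule, the surface gradient of $H\phi_z$ is controlled by $k\,|J_1(k|\omega - z|)|$, which decays at the \emph{same} rate $\mathcal{O}\big(\mathrm{dist}(z,D)^{-(d-1)/2}\big)$ as the function itself. Since $\partial D$ has finite surface measure, both $\|H\phi_z\|_{L^2(\partial D)}$ and $\|H\phi_z\|_{H^1(\partial D)}$ are $\mathcal{O}\big(\mathrm{dist}(z,D)^{-(d-1)/2}\big)$, hence so is $\|H\phi_z\|_{H^{1/2}(\partial D)}$. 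Applying the boundedness of $T$ from \eqref{T-def} then gives
$$W_{\text{FF}}(z) \le \|T\|_{H^{1/2}(\partial D) \to H^{-1/2}(\partial D)}\, \|H\phi_z\|_{H^{1/2}(\partial D)}^2 = \mathcal{O}\big(\mathrm{dist}(z,D)^{1-d}\big),$$
the claimed rate.

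I expect the main subtlety to lie in the $H^{1/2}$ estimate rather than the Funk--Hecke evaluation: one must verify that the decay rate survives differentiation and that the implied constants remain uniform as $\mathrm{dist}(z,D) \to \infty$. The latter is guaranteed because the bound $|\omega - z| \ge \mathrm{dist}(z,D)$ holds simultaneously for every $\omega \in \partial D$, so the large-argument Bessel asymptotics apply with a single threshold independent of $\omega$; this keeps the estimates genuinely uniform over the boundary and prevents the constants from degenerating.
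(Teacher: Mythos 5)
Your proof is correct and follows the same core strategy as the paper: reduce $W_{\text{FF}}(z)$ to $\left| (TH\phi_z , H\phi_z)_{L^2(\partial D)} \right|$ via the factorization \eqref{FFT}, bound this by $C\|H\phi_z\|^2_{H^{1/2}(\partial D)}$ using the boundedness of $T$, and extract the rate from the Funk--Hecke identity \eqref{F-H-id} together with the large-argument Bessel asymptotics. The one place you genuinely diverge is in how the $H^{1/2}(\partial D)$ norm is estimated. The paper identifies $H\phi_z$ as the boundary trace of the Herglotz wave function $v_{\phi_z}$, invokes the trace theorem to pass to $\|v_{\phi_z}\|_{H^1(D)}$, and then applies Funk--Hecke to the volume integral over $D$. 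You instead evaluate the kernel explicitly on the boundary, $(H\phi_z)(\omega) = 2\pi J_0(k|\omega - z|)$ for $d=2$ (resp. $4\pi j_0$ for $d=3$), control the surface gradient through $J_1$ (resp. $j_1$), which decays at the same rate, and obtain the $H^{1/2}(\partial D)$ bound by interpolating between $L^2(\partial D)$ and $H^1(\partial D)$. Both routes yield $\mathcal{O}\left(\mathrm{dist}(z,D)^{1-d}\right)$ after squaring, and your uniformity argument via $|\omega - z| \ge \mathrm{dist}(z,D)$ is sound. Your version has the advantage of making the $z$-dependence on $\partial D$ completely explicit; the paper's trace-theorem detour buys the convenience of never having to discuss tangential derivatives or fractional Sobolev norms on the (merely $\mathcal{C}^2$) boundary, since everything reduces to a volume $H^1(D)$ norm of a smooth function.
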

\begin{proof} 
In order to prove the result we let $v_g$ denote the Herglotz wave function for any $x \in \R^d$ given by 
$$v_g(x) =  \int_{ \mathbb{S}^{d-1} }  \mathrm{e}^{\mathrm{i} k {x}  \cdot \hat{y} } g(\hat{y} ) \, \mathrm{d}s(\hat{y} ) \,.$$
Here we can see that $v_g \in H^1_{\text{loc}}(\R^d)$ for any given $g \in L^2(\mathbb{S}^{d-1})$ which then implies that $v_g |_{\partial D} = Hg \in H^{1/2}(\partial D)$.
Therefore, just as in \cite{DSMHarris} we see that for any $z \in \R^d$
\begin{align*}
\Big|\Big(\mathcal{Q} N \mathcal{Q}^\top \mathcal{R} \,  \phi_z, \phi_z \Big)_{L^2(\mathbb{S}^{d-1})}\Big|  &= \left| ( TH \phi_z , H \phi_z )_{L^2(\partial D)}  \right|  \quad \text{by equation \eqref{FFT}} \\
									  &\leq C\| H \phi_z \|_{H^{1/2}(\partial D)}^2  \quad \text{by the boundedness of $T$}\\
 									  &= C \| v_{\phi_z} \|^2_{H^{1/2}(\partial D)} \quad \text{by the definition of $v_g$}\\
									  &\leq C \| v_{\phi_z} \|^2_{H^1(D)} \quad \text{by the Trace Theorem(see for e.g. \cite{evans})}.
\end{align*}
Now, by the definition of Herglotz wave function and the Funk-Hecke integral identities \eqref{F-H-id}--\eqref{F-H-id2} we have that 
$$  \| v_{\phi_z} \|^2_{H^1(D)} = \mathcal{O} \left(\mathrm{dist}(z,D)^{1-d}\right) \quad \text{ as } \quad\mathrm{dist}(z,D) \to \infty$$ 
where we have used the decay of the Bessel functions. 
\end{proof}
The result in Theorem \ref{decay1} is the same as in Theorem 1 in \cite{DSMHarris}. This is due to the fact that we have transformed the near-field operator into the far-field operator. A similar construction was considered in Section 2.4 of \cite{kirschbook} where boundary integral operators are used. Here we avoid the complex computational set up by not appealing to boundary integral operators to define the imaging functional when $\Gamma$ is a circle/sphere. From Theorem 2.8 of \cite{Liu} we have that the imaging functional $W_{\text{FF}}(z)$ is stable with respects to perturbations in the operator $N$.

By further appealing to the results in \cite{DSMHarris} we can construct another direct sampling imaging functionals using the transformed operator $\mathcal{Q} N \mathcal{Q}^\top \mathcal{R}$. We note that the analysis in Chapter 1 of \cite{kirschbook} gives that the compact operator $\mathcal{Q} N \mathcal{Q}^\top \mathcal{R}$ is injective and has a complete orthonormal eigensystem in $L^2(\mathbb{S}^{d-1})$ provided that $k^2$ is not a Dirichlet eigenvalue of the negative Laplacian in $D$. Let $(\lambda_j , \psi_j) \in \C \setminus \{0\} \times L^2(\mathbb{S}^{d-1})$ be the orthonormal eigensystem for the operator  $\mathcal{Q} N \mathcal{Q}^\top \mathcal{R}$. Therefore, we can define 
$$ |\mathcal{Q} N \mathcal{Q}^\top \mathcal{R} |^{p} g =  \sum\limits_{j=1}^{\infty}{ |\lambda_j |}^p (g,\psi_j)_{_{L^2(\mathbb{S}^{d-1})}} \psi_j   $$
for any fixed $p>0$ where the set $\{ \psi_j\}$ is an orthonormal basis in $L^2(\mathbb{S}^{d-1})$. Then, we have that the Factorization Method can be used to recover the scatterer $D$ (see for e.g. \cite{firstFM}) which gives the result that  
\begin{align}
\big|\mathcal{Q} N \mathcal{Q}^\top \mathcal{R} \big|^{1/2} g_z =\phi_z \quad \text{ for } z \in \R^d \label{fm-equ}
\end{align}
is solvable if and only if the sampling point $z \in D$. In \cite{DSMHarris} a connection between the Tikhonov regularized solution to \eqref{fm-equ} was used to developed another imaging functional. To derive the new imaging functional define the Tikhonov filter function for \eqref{fm-equ} given by 
$$\Gamma_{\alpha} (t)= \frac{ \sqrt{t}}{\alpha + t} \quad \text{ on the interval} \quad \left[0, \big\|\mathcal{Q} N \mathcal{Q}^\top \mathcal{R} \big\| \right]$$
which is a continuous function on the given interval. Here, we let $\alpha>0$ denote the {\it fixed} regularization parameter. See \cite{kirschipbook} for the study of regularization techniques and the Factorization Method. From this, we have that for every $\ep >0$ there is an approximating polynomial $ P_{\alpha,\ep}(t)$ such that 
\begin{align}
\|P_{\alpha,\ep}(t) -  \Gamma_{\alpha} (t) \|_{L^{\infty}} < \ep \quad \text{ on the interval} \quad \left[0, \big\|\mathcal{Q} N \mathcal{Q}^\top \mathcal{R} \big\| \right]. \label{poly}
\end{align}
Using the approximating polynomial $P_{\alpha,\ep}(t)$ for the filter function we can define a new imaging functional. 

{\bf The imaging functional via Tikhonov Regularization:} This imaging functional derived from the Tikhonov regularization of \eqref{fm-equ} for {\it fixed} regularization parameter $\alpha>0$ and approximation error $\ep$ is given by 
\begin{align}
W_{\text{TDSM}}(z) = \left\| {\color{black} P_{\alpha,\ep}}\left( \big|\mathcal{Q} N \mathcal{Q}^\top \mathcal{R} \big| \right) \phi_z \right\|^2_{L^2(\mathbb{S}^{d-1})} \quad  \textrm{ where } \quad  \phi_z(\hat{x}) = \text{e}^{-\text{i}k z \cdot \hat{x}}. \label{dsm2}
\end{align}
where the polynomial $P$ satisfies $P_{\alpha,\ep}(t) =\Gamma_{\alpha} (t) + \mathcal{O}(\ep)$ as $\ep \to 0$. From this we have the following result by the analysis in \cite{DSMHarris}. The imaging functional given in \eqref{dsm2} was motivated by the work in \cite{dsm-fm} and we extend that work to the case of near-field data. There are some interesting questions when considering the implementation of the $W_{\text{TDSM}}(z)$ such as: how to pick $\alpha$ and which polynomial approximation method works best for constructing $P_{\alpha,\ep}(t)$. Even with these unanswered questions our numerical experiments show that $W_{\text{TDSM}}(z)$ can provide good restrictions of the scatterer for a simple least-squares polynomial approximation and without having to find an optimal regularization parameter.

\begin{theorem} \label{TDMS}
Let the imaging functional $W_{\text{TDSM}}(z)$ be as defined by \eqref{dsm2}. Then for any sampling point $z \in \R^d \setminus \overline{D}$ we have that  
$\exists \, C_\alpha>0$ independent of $z$ such that 
$$W_{\text{TDSM}}(z) \leq C_\alpha W_{\text{FF}}(z) + \mathcal{O}( \ep ) \quad  \text{ as } \quad \ep \to 0$$
for all fixed $\alpha>0$ provided that $P_{\alpha,\ep}(t) =\Gamma_{\alpha} (t) + \mathcal{O}(\ep)$ as $\ep \to 0$.
\end{theorem}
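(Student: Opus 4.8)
The plan is to work in the spectral representation of the transformed far-field operator $F := \mathcal{Q} N \mathcal{Q}^\top \mathcal{R}$ and to compare \emph{both} imaging functionals to the single quantity $\|H\phi_z\|^2_{L^2(\partial D)}$, which already controls $W_{\text{FF}}$ through the factorization \eqref{FFT} and the Funk--Hecke estimates of Theorem \ref{decay1}. First I would dispose of the polynomial error: since $P_{\alpha,\ep}(t)=\Gamma_\alpha(t)+\mathcal{O}(\ep)$ uniformly on $[0,\|F\|]$, the functional calculus gives $P_{\alpha,\ep}(|F|)=\Gamma_\alpha(|F|)+R_\ep$ with $\|R_\ep\|=\mathcal{O}(\ep)$. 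Because $\phi_z$ has fixed norm $\|\phi_z\|_{L^2(\mathbb{S}^{d-1})}=|\mathbb{S}^{d-1}|^{1/2}$ and $\Gamma_\alpha(|F|)$ is uniformly bounded (by $\tfrac{1}{2\sqrt{\alpha}}$), expanding the square yields
\[
W_{\text{TDSM}}(z)=\big\|\Gamma_\alpha(|F|)\phi_z\big\|^2_{L^2(\mathbb{S}^{d-1})}+\mathcal{O}(\ep)\qquad\text{as }\ep\to0,
\]
with the $\mathcal{O}(\ep)$ term independent of $z$.

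Next comes a filter bound. Writing $\Gamma_\alpha(t)^2=\tfrac{t}{(\alpha+t)^2}\le\tfrac{1}{\alpha^2}\,t$ and applying the functional calculus to the positive self-adjoint operator $|F|$ gives
\[
\big\|\Gamma_\alpha(|F|)\phi_z\big\|^2_{L^2(\mathbb{S}^{d-1})}=\big(|F|(\alpha+|F|)^{-2}\phi_z,\phi_z\big)_{L^2(\mathbb{S}^{d-1})}\le\frac{1}{\alpha^2}\big(|F|\phi_z,\phi_z\big)_{L^2(\mathbb{S}^{d-1})}.
\]
Everything then reduces to comparing $(|F|\phi_z,\phi_z)$ with $W_{\text{FF}}(z)=|(F\phi_z,\phi_z)|$. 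I would deliberately avoid bounding $(|F|\phi_z,\phi_z)$ by $|(F\phi_z,\phi_z)|$ through a reverse triangle inequality on the eigenvalues: since the eigenvalues of the sound-soft far-field operator lie on the optical-theorem circle and cluster at the origin tangentially to the real axis, such a term-by-term comparison is false for general test functions. The comparison must instead be routed through $\|H\phi_z\|^2_{L^2(\partial D)}$.

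The heart of the argument is thus the two-sided estimate $W_{\text{FF}}(z)\simeq\|H\phi_z\|^2_{L^2(\partial D)}\simeq(|F|\phi_z,\phi_z)$. For the lower bound on $W_{\text{FF}}$ I would use coercivity of the middle operator: by \eqref{FFT} and \eqref{T-def}, $W_{\text{FF}}(z)=\big|(TH\phi_z,H\phi_z)_{L^2(\partial D)}\big|$, and since $T=-S^{-1}_{\partial D\to\partial D}$ with $S_{\partial D\to\partial D}$ satisfying a G\aa rding inequality (the compact defect removed by the assumption that $k^2$ is not a Dirichlet eigenvalue), one obtains $|(T\psi,\psi)|\ge c\|\psi\|^2_{H^{1/2}(\partial D)}$, hence $W_{\text{FF}}(z)\ge c\|H\phi_z\|^2_{L^2(\partial D)}$. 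For the upper bound $(|F|\phi_z,\phi_z)\le C\|H\phi_z\|^2_{L^2(\partial D)}$ I would invoke the factorization-method range estimate for $F=H^*TH$: by normality, $|F|\le F_\#:=|\mathrm{Re}\,F|+|\mathrm{Im}\,F|\le\sqrt{2}\,|F|$, and the inclusion $\mathrm{Range}(F_\#^{1/2})\subseteq\mathrm{Range}(H^*)$ from Kirsch's factorization method gives, via Douglas' range lemma, $|F|\le F_\#\le\lambda\,H^*H$, so that $(|F|\phi_z,\phi_z)\le\lambda\|H\phi_z\|^2$. Combining the two bounds yields $(|F|\phi_z,\phi_z)\le\tfrac{\lambda}{c}W_{\text{FF}}(z)$, and together with the filter bound and the polynomial error this gives $W_{\text{TDSM}}(z)\le C_\alpha W_{\text{FF}}(z)+\mathcal{O}(\ep)$ with $C_\alpha=\tfrac{\lambda}{c\alpha^2}$ independent of $z$.

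The main obstacle is precisely this upper bound $(|F|\phi_z,\phi_z)\le C\|H\phi_z\|^2$ --- the range-inclusion direction of the factorization method --- because, unlike the coercivity lower bound, it genuinely involves the operator $|F|$ rather than $F$ and cannot be obtained by a naive Cauchy--Schwarz step (which would lose a power of $\|H\phi_z\|$ and only yield the weaker rate $W_{\text{FF}}^{1/2}$). This is the content I would borrow from the analysis in \cite{DSMHarris}, where normality of the far-field operator for the non-absorbing sound-soft obstacle makes $|F|$ and $F_\#$ comparable and puts the range identity at one's disposal.
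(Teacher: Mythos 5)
Your argument is correct and is essentially the proof the paper delegates to Section~4 of \cite{DSMHarris}: the paper itself gives no details, only the citation, and the cited argument proceeds exactly as you do --- absorb the polynomial error using $\|\Gamma_\alpha\|_\infty\le\tfrac{1}{2\sqrt{\alpha}}$ and the fixed norm of $\phi_z$, bound the filtered norm by $\alpha^{-2}(|F|\phi_z,\phi_z)$, and then compare to $W_{\text{FF}}(z)$ through $\|H\phi_z\|^2$ via the coercivity of $T$ on one side and the factorization $F_\#=H^*T_\#H$ (equivalently the range identity) with $|F|\le F_\#$ by normality on the other. You correctly identify the one genuinely delicate point, namely that a direct eigenvalue-wise comparison of $(|F|\phi_z,\phi_z)$ with $|(F\phi_z,\phi_z)|$ fails and the detour through $\|H\phi_z\|^2$ is what makes the linear (rather than square-root) rate possible.
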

\begin{proof}
For the proof of Theorem \ref{TDMS} see Section 4 of \cite{DSMHarris} to avoid repetition. 
\end{proof}

The imaging functional provided in \eqref{dsm2} is equivalent to the one studied in \cite{DSMHarris} where one has the far-field operator which corresponds to $\mathcal{Q} N \mathcal{Q}^\top \mathcal{R}$. We also note that even thought the stability of the imaging functional $W_{\text{TDSM}}(z)$ has not been established our numerical experiments with added noise in the data still provides good reconstructions. Also, the numerical experiments provided in \cite{DSMHarris} it has been seen that $W_{\text{TDSM}}(z)$ provides better reconstructions than $W_{\text{FF}}(z)$ in $\R^3$. This has not been verified theoretically but the multiple examples in \cite{DSMHarris} would seem to suggest this to be true. 

Notice, that just like the Dirichlet-to-Far-Field Transformation $\mathcal{Q}$ we can construct the operator $\mathcal{R}$ without a priori knowledge of $D$. We will also see that, $\mathcal{R}$ can easily and efficiently be approximated numerically. To do so, we will right the operator as an integral operator with an explicit kernel function. From the fact that $\hat x =(\cos\theta \, , \, \sin \theta)$ for $\theta \in [0, 2\pi)$ if $d=2$ we let $g(\theta) =g\big( (\cos\theta \, , \, \sin \theta) \big)$. This implies that 
\begin{align}
(\mathcal{R}g)(\theta) = g(\theta+\pi) \label{r-def}
\end{align}
where we have used the sum of angles formula to obtain the equalities 
$$-\cos(\theta)=\cos(\theta+\pi) \quad \text{and } \quad -\sin(\theta)=\sin(\theta+\pi).$$
 Now, using the Fourier series for  $g(\theta)$ we have that  
$$g(\theta) = \sum_{|m| = 0}^{\infty} {g}_{m} \text{e}^{\text{i}m\theta} \quad \text{ where } \quad  g_m =  \frac{1}{2\pi} \int\limits_0^{2\pi} g(\phi) \text{e}^{-\text{i} m \phi} \text{d} \phi \quad \text{ for all} \,\, m\in \mathbb{Z}.$$ 
Therefore, by the definition of the operator $\mathcal{R}$ in equation \eqref{r-def} we obtain that 
$$ (\mathcal{R} g)(\theta) = \int\limits_0^{2\pi} R (\theta, \phi) g(\phi) \text{d}{\phi} \quad \text{where} \quad R(\theta, \phi) = \frac{1}{2\pi} \sum_{|m| = 0}^{\infty} \text{e}^{\text{i}m(\theta - \phi + \pi)}$$
by using \eqref{r-def} as well as the Fourier series for $g$. 
In order to numerically compute the direct sampling methods imaging functionals we need a way to compute $(\mathcal{R} g)(\theta)$. To this end, we know give a result that implies that $\mathcal{R}$ can be approximated by a truncated series for sufficiently smooth $g$. 

\begin{lemma}\label{R-converge}
Let $\mathcal{R}: H^{p}(0,2\pi) \longrightarrow L^2(0,2 \pi)$ be the operator defined by \eqref{r-def} and $\mathcal{R}_M: H^{p}(0,2\pi) \longrightarrow L^2(0,2 \pi)$ be the truncated series for some $M \in \mathbb{N}$ with $p>0$. Then we have norm-convergence with convergence rate given by
$$\|\mathcal{R} - \mathcal{R}_{M}\|_{H^{p}(0,2\pi) \mapsto L^2(0,2 \pi)} =\mathcal{O}\left( \frac{1}{ M^p} \right) , \quad \text{as} \quad M \longrightarrow \infty.$$
\end{lemma}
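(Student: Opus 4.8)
The plan is to exploit the fact that $\mathcal{R}$ is diagonalized by the Fourier basis on the circle, so that the whole question reduces to controlling a Fourier tail. First I would observe that since $(\mathcal{R}g)(\theta) = g(\theta + \pi)$, applying $\mathcal{R}$ to the exponential $\mathrm{e}^{\mathrm{i}m\theta}$ simply multiplies it by $\mathrm{e}^{\mathrm{i}m\pi} = (-1)^m$; equivalently, in terms of the Fourier coefficients $g_m$ introduced just before the lemma, $(\mathcal{R}g)(\theta) = \sum_{m} (-1)^m g_m \mathrm{e}^{\mathrm{i}m\theta}$. Truncating the kernel series at $|m| \le M$ then amounts exactly to keeping only the modes with $|m| \le M$, which follows from the orthogonality relation $\int_0^{2\pi} \mathrm{e}^{-\mathrm{i}m\phi} g(\phi)\,\mathrm{d}\phi = 2\pi g_m$, so that $(\mathcal{R}_M g)(\theta) = \sum_{|m| \le M} (-1)^m g_m \mathrm{e}^{\mathrm{i}m\theta}$.

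Next I would subtract the two operators and read off
$$\big( (\mathcal{R} - \mathcal{R}_M) g \big)(\theta) = \sum_{|m| > M} (-1)^m g_m \mathrm{e}^{\mathrm{i}m\theta},$$
and apply Parseval's identity on $(0,2\pi)$, which gives $\|(\mathcal{R} - \mathcal{R}_M) g\|_{L^2}^2 = 2\pi \sum_{|m| > M} |g_m|^2$ because $|(-1)^m| = 1$. The key step is then to compare this tail against the periodic Sobolev norm, which on the circle is $\|g\|_{H^p}^2 = 2\pi \sum_m (1 + m^2)^p |g_m|^2$. For every index with $|m| > M$ one has $(1 + m^2)^p \ge (1 + M^2)^p \ge M^{2p}$, so I would factor out the smallest weight from the tail:
$$\sum_{|m| > M} |g_m|^2 \le \frac{1}{M^{2p}} \sum_{|m| > M} (1 + m^2)^p |g_m|^2 \le \frac{1}{M^{2p}} \cdot \frac{1}{2\pi}\|g\|_{H^p}^2.$$
Combining the two displays yields $\|(\mathcal{R} - \mathcal{R}_M) g\|_{L^2} \le M^{-p}\|g\|_{H^p}$, and taking the supremum over $\|g\|_{H^p} = 1$ gives the claimed operator-norm bound $\|\mathcal{R} - \mathcal{R}_M\|_{H^p \to L^2} \le M^{-p} = \mathcal{O}(M^{-p})$.

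I do not expect any serious obstacle here: the result is essentially the standard statement that Fourier truncation of a smooth function converges at the Sobolev rate, with $\mathcal{R}$ acting as a unimodular Fourier multiplier that leaves the estimate unchanged. The only points requiring a little care are fixing the convention for the norm on $H^p(0,2\pi)$ so that the weights are exactly $(1+m^2)^p$, and confirming that truncating the kernel $R(\theta,\phi)$ genuinely coincides with the Fourier projection, which is the one-line orthogonality computation noted above. If one prefers to sidestep the norm convention, the argument runs verbatim with $(1+m^2)^{p}$ replaced by any equivalent weight family, since only the growth of order $|m|^{2p}$ drives the rate.
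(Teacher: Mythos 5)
Your argument is correct and follows essentially the same route as the paper's proof: express $(\mathcal{R}-\mathcal{R}_M)g$ as the Fourier tail $\sum_{|m|>M}$, apply Parseval, and bound the tail by factoring out the smallest Sobolev weight $(1+M^2)^p \ge M^{2p}$ to obtain the operator-norm rate $M^{-p}$. The only differences are cosmetic --- you phrase $\mathcal{R}$ explicitly as the unimodular multiplier $(-1)^m$ and track the $2\pi$ normalization constants more carefully than the paper does, neither of which changes the substance.
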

\begin{proof}
To begin, we clearly see that 
$$(\mathcal{R} - \mathcal{R}_{M})g =  \frac{1}{2\pi} \sum_{|m| = M+1}^{\infty}g_m \text{e}^{\text{i}m (\theta+\pi)}.$$
To prove the claim, we now estimate the $L^2(0,2\pi)$--norm of $(\mathcal{R} - \mathcal{R}_{M})g$ which is given by 
$$\|(\mathcal{R} - \mathcal{R}_{M})g\|^2_{L^2(0,2\pi) } = \sum_{|m| = M+1}^{\infty}  | \text{e}^{\text{i}m \pi} g_m |^2 .$$
By using the fact that $| \text{e}^{\text{i}m \pi}| =1$ for any $m \in \Z$ we have that  
\begin{align*}
\|(\mathcal{R} - \mathcal{R}_{M})g\|^2_{L^2(0,2\pi) } &= \sum_{|m| = M+1}^{\infty}  \frac{(1+|m|^2)^p}{(1+|m|^2)^p} |g_m |^2 \\
									      &\leq \frac{1}{(1+M^2)^p}  \sum_{|m| = M+1}^{\infty}  (1+|m|^2)^p |g_m |^2 \\
									      &\leq  \frac{1}{M^{2p}} \|g\|^2_{H^{p}(0,2\pi)}.
\end{align*}
Taking the supremum over $g$ with unit norm in $H^{p}(0,2\pi)$ proves the claim. 
\end{proof}

\begin{remark}
Even though we only focus on the analysis in $\R^2$, it is clear that one can define $\mathcal{R}$ similarly in $\R^3$. In $\R^3$ one can use Spherical Harmonics to define the operator as well as prove a similar approximation result as in Lemma \ref{R-converge}.
\end{remark}

Recall, that in order to compute the imaging functionals $W_{\text{FF}}(z)$ and $W_{\text{TDSM}}(z)$ given by equation \eqref{dsm1} and  \eqref{dsm1} respectively, we need to evaluate $\mathcal{R}\phi_z$. Since $\phi_z$ is given by a  plane wave we have that it is a  smooth function.
This implies that $\mathcal{R}_M \phi_z \approx \mathcal{R}\phi_z$ where the truncation $M$ can be taken to be a reasonably small. In Section \ref{numerics}, we see that  $M=10$ gives good reconstructions of the scatterer $D$ when used to truncate the operators $\mathcal{R}$ and $\mathcal{Q}$ for both of the imaging functionals studied in this section.

\section{Direct Sampling with Cauchy Data}\label{sect:dsm-cauchy}
In this section, we consider another direct sampling method imaging functional where one has access to the Cauchy data for the scattering problem \eqref{scalarprob}--\eqref{src}. The main idea is to use the representation of the solution given in \eqref{us-rep} to establish the resolution analysis. The imaging functional we consider has been studied for the case when the scatterer is either isotropic or anisotropic in \cite{postdocpaper}. The analysis of the imaging functional studied here has not been done for the case of a sound soft scatterer, which is the case in this paper. Therefore, in this section, we will assume that we have the `measured' Cauchy data.  

\begin{remark}
Notice, that if  only the scattered field $u^s(x,y)$ is given for all $x,y \in \Gamma$ then one can compute the normal derivative $\partial_{\nu} u^s(x,y)$ on $\Gamma$. This can be done by computing the scattered field on the exterior of Int$(\Gamma)$ by using a similar formulation as in \eqref{BIO}. 
\end{remark}

{\bf The imaging functional for Cauchy data:} Assume that we have the Cauchy data $u^s(x,y)$ and $\partial_{\nu} u^s(x,y)$ for all  $x,y \in \Gamma$ that corresponds to the scattering problem \eqref{scalarprob}--\eqref{src}. Then, we define a the following imaging functional
\begin{align}
W_{\text{CD}}(z) = \int_{\Gamma} \left | \int_\Gamma \partial_{\nu} \overline{ \Phi(x,z)} u^s(x,y) - \overline{\Phi(x,z)} \partial_{\nu}u^s(x,y)  \mathrm{d}s(x)\right |^\rho  \mathrm{d}s(y) \label{dsm3}
\end{align}
where $\rho>0$ is a positive constant. Here, again $\Phi$ corresponds to the radiating fundamental solution to Helmholtz equation. Also, the normal derivative is with respect the to $x$ variable such that $ \partial_{\nu} = \nu(x) \cdot \grad_x$ for any $x \in \Gamma$. 

In order to analyize the imaging functional $W_{\text{CD}}(z)$ we first recall that by \eqref{us-rep} we have that the scattered field is given by  
$$u^s( x, y) = \int_{\partial D} \Phi( x ,  \omega) \big[ T \Phi( \cdot \, , y) \big](\omega) \, \text{d}s(\omega) \quad \text{ for any} \quad x,y \in \Gamma$$
where $T:H^{1/2}(\partial D) \longrightarrow H^{-1/2}(\partial D)$ is a bounded linear operator, provided that $k^2$ is not a Dirichlet eigenvalue of the negative Laplacian in $D$. We will derive an equivalent expression for the imaging functional $W_{\text{CD}}(z)$ where the dependance on $\partial D$ is made more explicit. To this end, by taking the normal derivative on $\Gamma$ of the above representation of the scattered field we have that 
$$\partial_{\nu} u^s( x, y) = \int_{\partial D}\partial_{\nu} \Phi( x ,  \omega) \big[ T \Phi( \cdot \, , y) \big](\omega) \, \text{d}s(\omega) \quad \text{ for any} \quad x,y \in \Gamma. $$
Notice, that by using the above representations of the Cauchy data for $u^s(x,y)$ and $\partial_{\nu} u^s(x,y)$ we have that 
\begin{align*}
& \int_\Gamma \partial_{\nu} \overline{ \Phi(x,z)} u^s(x,y) - \overline{\Phi(x,z)}\partial_{\nu} u^s(x,y)  \text{d}s(x)  \\
 & \hspace{0.2in}=  \int_\Gamma  \left[ \partial_{\nu}  \overline{\Phi(x,z)}  \int_{\partial D} \Phi(x , \omega) T \Phi(  \cdot \, ,y)  \mathrm{d}s(\omega) - \overline{ \Phi(x,z)}   \int_{\partial D} \partial_{\nu}  \Phi(x , \omega) T \Phi(  \cdot \, ,y)  \mathrm{d}s(\omega) \right]  \text{d}s(x) \\
 & \hspace{0.2in} =  \int_\Gamma  \int_{\partial D}  \left[ \partial_{\nu}  \overline{ \Phi(x,z)} \Phi(x,\omega) -  \overline{ \Phi(x,z)} \partial_{\nu} \Phi(x,\omega)\right]  T \Phi(  \cdot \,,y) \text{d}s(\omega) \,  \text{d}s(x)\\
 & \hspace{0.2in}= \int_{\partial D} \left[ \int_\Gamma \partial_{\nu}  \overline{ \Phi(x,z)} \Phi(x,\omega) -  \overline{ \Phi(x,z)} \partial_{\nu} \Phi(x,\omega)  \text{d}s(x) \right] T \Phi(  \cdot \, ,y)  \text{d}s(\omega).
\end{align*}
From the analysis in Section 2.2 of \cite{postdocpaper} we have that 
$$ \frac{1}{2\text{i}} \Im \Phi(z,\omega) =  \int_\Gamma   \left[ \partial_{\nu}  \overline{ \Phi(x,z)} \Phi(x,\omega) -  \overline{ \Phi(x,z)} \partial_{\nu} \Phi(x,\omega) \right] \text{d}s(x).$$
This is a simple consequence of Green's second identity and the symmetry of the fundamental solution. Therefore, we have that the imaging functional is equivalent to 
$$W_{CD}(z) = \int_{\Gamma} \left |\int_{\partial D} \frac{1}{2\text{i}} \Im \Phi(z,\omega)  [T \Phi(  \cdot \, ,y)](\omega)  \text{d}s(\omega) \right |^\rho \text{d}s(y)$$
where  we recall that 
$$\Im \Phi(z, \omega) = \left\{\begin{array}{lr} \frac{1}{4} J_0(k | \omega - z|) \, \, & \quad \text{if} \quad d = 2  \,,\\
 				&  \\
\frac{1}{4\pi} j_0(k | \omega - z|) & \quad \text{if} \quad d = 3 \,.
\end{array} \right. $$
As we see by the equivalent representation of $W_{CD}(z)$ we have that the inner integral is in-terms of a Bessel function kernel which will be maximal when $z$ is on $\partial D$ but will decay as the sampling point moves away from the scatterer. 

\begin{theorem}\label{decay2}
Let the imaging functional $W_{\text{CD}}(z)$ be as defined by \eqref{dsm3}. Then for any sampling point $z \in \R^d \setminus \overline{D}$ we have that  
$$W_{\text{CD}}(z)  =  \mathcal{O} \left( \text{dist}(z,D)^{(1-d)\rho/2} \right) \quad \text{ as } \,\,\, \text{dist}(z,D) \to \infty \; \textrm{ for } \; d = 2, 3.$$
\end{theorem}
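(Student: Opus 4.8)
The plan is to work from the equivalent representation of the imaging functional already established above, namely
$$W_{\text{CD}}(z) = \int_{\Gamma} \left| \int_{\partial D} \frac{1}{2\mathrm{i}} \Im \Phi(z,\omega)\, [T\Phi(\cdot,y)](\omega)\, \mathrm{d}s(\omega) \right|^\rho \mathrm{d}s(y),$$
and to show that the inner integral decays at the rate $\text{dist}(z,D)^{(1-d)/2}$ uniformly in $y \in \Gamma$; raising to the power $\rho$ and integrating over the bounded curve $\Gamma$ then yields the claim. The first step is to interpret the inner integral as the duality pairing $\langle \Im\Phi(z,\cdot),\, T\Phi(\cdot,y)\rangle$ between $H^{1/2}(\partial D)$ and $H^{-1/2}(\partial D)$; this is legitimate because, for $z \in \R^d \setminus \overline{D}$, the function $\omega \mapsto \Im\Phi(z,\omega)$ is smooth on $\partial D$ and $T\Phi(\cdot,y) \in H^{-1/2}(\partial D)$. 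By the duality estimate together with the boundedness of $T$,
$$\left| \int_{\partial D} \Im\Phi(z,\omega)\, [T\Phi(\cdot,y)](\omega)\, \mathrm{d}s(\omega)\right| \leq \|\Im\Phi(z,\cdot)\|_{H^{1/2}(\partial D)}\, \|T\|\, \|\Phi(\cdot,y)\|_{H^{1/2}(\partial D)}.$$
Since $\text{dist}(\Gamma,D) > 0$, the factor $\|\Phi(\cdot,y)\|_{H^{1/2}(\partial D)}$ is bounded uniformly for $y$ in the compact set $\Gamma$, so the whole $y$-dependence is harmless and the entire $z$-dependence is carried by $\|\Im\Phi(z,\cdot)\|_{H^{1/2}(\partial D)}$.

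The core of the argument is therefore to estimate $\|\Im\Phi(z,\cdot)\|_{H^{1/2}(\partial D)}$, and I would bound it by the $H^1(\partial D)$ norm. Recalling the explicit form of $\Im\Phi$, in $\R^2$ we have $\Im\Phi(z,\omega) = \tfrac14 J_0(k|\omega-z|)$ and in $\R^3$ we have $\Im\Phi(z,\omega) = \tfrac{1}{4\pi} j_0(k|\omega-z|)$. The asymptotics quoted earlier give $J_0(t) = \mathcal{O}(t^{-1/2})$ and $j_0(t) = \mathcal{O}(t^{-1})$, i.e. $|\Im\Phi(z,\omega)| = \mathcal{O}(|\omega-z|^{(1-d)/2})$; since $|\omega-z| \geq \text{dist}(z,D)$ for all $\omega \in \partial D$ and the exponent $(1-d)/2$ is negative, this gives the uniform bound $\mathcal{O}(\text{dist}(z,D)^{(1-d)/2})$. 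For the tangential gradient, the chain rule produces $\nabla_\omega \Im\Phi(z,\omega)$ proportional to $J_0'(k|\omega-z|)$ (respectively $j_0'$) times a unit vector, and because $J_0' = -J_1$ (and likewise $j_0'$) decays at the same order as $J_0$ (respectively $j_0$), the gradient obeys the identical bound $\mathcal{O}(\text{dist}(z,D)^{(1-d)/2})$. Hence $\|\Im\Phi(z,\cdot)\|_{H^1(\partial D)} = \mathcal{O}(\text{dist}(z,D)^{(1-d)/2})$, and the embedding $H^1(\partial D) \hookrightarrow H^{1/2}(\partial D)$ transfers this to the required $H^{1/2}$ norm.

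Combining the two steps, the inner integral is $\mathcal{O}(\text{dist}(z,D)^{(1-d)/2})$ uniformly in $y$; raising to the power $\rho$ yields $\mathcal{O}(\text{dist}(z,D)^{(1-d)\rho/2})$, and integrating over the finite-measure curve $\Gamma$ preserves this order, completing the proof. I expect the main obstacle to be the derivative estimate for the $H^{1/2}$ (or $H^1$) norm: the decay of $\Im\Phi$ itself is immediate from the Funk--Hecke representation, but one must verify that differentiating the Bessel kernel does not worsen the decay rate, which hinges on the fact that $J_0'$ and $j_0'$ share the leading-order decay of $J_0$ and $j_0$, confirmed by the stated asymptotics. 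A minor secondary point is justifying that $\Im\Phi(z,\cdot)$ is smooth enough on $\partial D$ to legitimize the $H^{1/2}$--$H^{-1/2}$ pairing, which follows since $z$ stays at positive distance from $\partial D$.
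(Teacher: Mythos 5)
Your proposal is correct and follows essentially the same route as the paper: rewrite $W_{\text{CD}}(z)$ via the representation with the $\Im\Phi(z,\cdot)$ kernel, bound the inner integral by the $H^{1/2}(\partial D)$--$H^{-1/2}(\partial D)$ duality and the boundedness of $T$, absorb the $y$-dependence using $\mathrm{dist}(\Gamma,D)>0$, and extract the decay from the Bessel asymptotics of $J_0,j_0$ and their derivatives. The only (harmless) difference is that you estimate $\|\Im\Phi(z,\cdot)\|_{H^{1}(\partial D)}$ directly by pointwise bounds on the boundary, whereas the paper passes through the Trace Theorem to $\|\Im\Phi(z,\cdot)\|_{H^{1}(D)}$ and estimates the volume norm; both hinge on the same fact you correctly flag, namely that $J_0'=-J_1$ and $j_0'$ decay at the same leading order as $J_0$ and $j_0$.
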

\begin{proof}
To begin, we first recall $T:H^{1/2}(\partial D) \longrightarrow H^{-1/2}(\partial D)$ is a bounded linear operator, provided that $k^2$ is not a Dirichlet eigenvalue of the negative Laplacian in $D$. Therefore, we have the estimates  
\begin{align*}
W_{CD}(z)   &= \int_{\Gamma} \left |\int_{\partial D} \frac{1}{2\text{i}} \Im \Phi(z,\omega)  [T \Phi(  \cdot \, ,y)](\omega)  \text{d}s(\omega) \right |^\rho  \text{d}s(y)  \\
		   &\leq C  \|\Im \Phi(z,\cdot) \|^\rho _{H^{1/2}(\partial D)} \int_{\Gamma} \|T \Phi(\cdot \, , y) \|^\rho _{H^{-1/2}(\partial D)} \text{d}s(y)  \quad \text{by the dual--pairing }\\
 		   &\leq C  \|\Im \Phi(z,\cdot) \|^\rho _{H^{1/2}(\partial D)} \int_{\Gamma} \| \Phi(\cdot \, , y) \|^\rho _{H^{1/2}(\partial D)} \text{d}s(y)  \quad \text{by the boundedness of $T$}\\
		   &\leq C  \|\Im \Phi(z,\cdot) \|_{H^{1}(D)} \int_{\Gamma} \| \Phi(\cdot \, , y) \|^\rho _{H^{1}(D)} \text{d}s(y) \quad \text{by the Trace Theorem}.
\end{align*}
Notice that $\Phi(\cdot \, , y)$ restricted to the scatterer $D$ is a smooth function for every $y \in \Gamma$ since we have assumed that $\text{dist}(\Gamma , D)>0$. Then, we have obtained that 
$$\int_\Gamma \| \Phi(\cdot \, , y) \|^\rho _{H^{1}(D)} \text{d}s(y) =\| \Phi \|^\rho _{L^{\rho}\left[\Gamma ; H^1(D) \right]} $$
which is a fixed constant depending on $D$ and $\Gamma$. Therefore, we have that 
$$W_{CD}(z) \leq C  \|\Im \Phi(z,\cdot) \|^\rho _{H^{1}(D)}$$
and we then use the fact that 
$$  \|J_0(k|\cdot - z|) \|_{H^{1}(D)} =\mathcal{O}\big( \text{dist}(z,D)^{-1/2}\big) \quad \text{and} \quad \|j_0(k|\cdot - z|) \|_{H^{1}(D)} =\mathcal{O}\big( \text{dist}(z,D)^{-1}\big)$$
as $\text{dist}(z,D) \to \infty$, which proves the claim.
\end{proof}

Note, that the imaging functional in this section does not require a transformation of the data as in the previous section but one does need both pieces of Cauchy data. In order to have an explicit decay rate for the direct sampling functionals as $\mathrm{dist}(z,D) \to \infty$ one must either transform the near-field data or use the Cauchy data in the reconstruction. This is due to the fact that when deriving bounds for the imaging functionals using just the scattered field as in reverse time migration one uses the Helmholtz-Kirchhoff integral identity. This integral identity has a leading order term that is bounded as $\mathrm{dist}(z,D) \to \infty$. This is in contrast to the Funk-Hecke integral identity which can be evaluated explicitly using Bessel functions which has a well established asymptotic decay rate.

\section{Numerical Examples}\label{numerics}
In this section, we present some numerical examples in $\R^2$ to show the applicability of the imaging functionals studied in the previous sections. In our examples, we will take the scattering obstacle $D$ to be a star-like region with respect to the origin for simplicity. Therefore, we will take the the boundary of the scatterer to be given by 
$$\partial D = r(\theta)\big( \cos \theta \, , \sin \theta) \quad \text{for all}  \quad 0\leq \theta \leq 2\pi$$
where the radial function $r(\theta)>0$ is a smooth $2\pi$ period function. 
The radial functions we consider in our examples are given by 
\begin{eqnarray*}
r(\theta) &=&0.5 \quad \text{circular domain,} \\
r(\theta) &=&0.25 \big(2+0.5\cos(3\theta) \big) \quad \text{acorn-shaped domain.} \\
r(\theta) &=&0.75\big(1-0.25\sin(4\theta)\big) \quad \text{flower-shaped domain,} \\
r(\theta) &=&0.5 \Big( |\sin(\theta)|^{10} +  \frac{1}{10} |\cos(\theta)|^{10} \Big)^{-1/10} \quad \text{rounded-square domain.} 
\end{eqnarray*}

Here, we will take $\Gamma=\partial B(0;R)$ with $R=5$ in all our examples. The locations of the sources are given by $y_j = 5 \big( \cos \theta_j \, , \sin \theta_j)$ for 64 equally spaced points  $\theta_j \in [0 , 2\pi )$. In order to compute the simulated scattering data we will assume that the scattered field has the series representation in $\R^2 \setminus \overline{D}$ such that 
$$u^s(x,y_j) = \sum\limits_{|m|=0}^{\infty} \alpha_m(y_j) {H}^{(1)}_m (k|x|) \text{e}^{\text{i}m\theta_x} \quad \text{ for each } \quad y_j \in \Gamma$$ 
where $ {H}^{(1)}_m$ is the first kind Hankel function of order $m$. The coefficients (depending only on $y_j$) $\alpha_m$ in the series representation are determined by the boundary condition on $\partial D$. To compute the scattering data we solve $u^s( \cdot \, , y_j) = -\Phi( \cdot \, , y_j )$ on $\partial D$ for the truncated series where $|m|=0, \, \cdots, 15$ on the discretized boundary $r(\theta_{x_i})\big( \cos \theta_{x_i} \, , \sin \theta_{x_i})$ where $\theta_{x_i} \in [0 , 2\pi )$ are 64 equally spaced point. 
This implies that each $j$ gives a $64 \times 31$ linear system for the coefficients $\alpha_m$ which is solved via spectral cut-off since the resulting matrix is highly ill-conditioned. Once the coefficients have been computed we have that the approximation of the scattering data on $\Gamma$ is given by 
$$u^s(x_i,y_j) \approx \sum\limits_{|m|=0}^{15} \alpha_m(y_j) \text{H}^{(1)}_m (5k) \text{e}^{\text{i}m\theta_{x_i}} $$
and 
$$\partial_{\nu}  u^s(x_i,y_j) \approx \sum\limits_{|m|=0}^{15}\alpha_m(y_j)  \frac{k}{2}   \big( {H}^{(1)}_{m-1} (5k) - {H}^{(1)}_{m+1} (5k) \big) \text{e}^{\text{i}m\theta_{x_i}}$$ 
where we have used the recursive relations for Bessel functions to compute the derivative of Hankel functions. In many applications, the measured scattering data is given with random noise, so we let $\delta$ denote the noise level. Therefore, we have that in our simulations the noisy data is given by 
$$u^{s,\delta} (x_i , y_j)= u^s(x_i , y_j) \left(1 + \delta E_{i,j}\right) \quad \text{and} \quad  \partial_{\nu} u^{s,\delta} (x_i , y_j) =  \partial_{\nu} u^{s} (x_i , y_j) \left(1 + \delta E_{i,j}\right)$$
where $E$ is the random complex--valued matrix of size $64 \times 64$ with norm $\|E\|_{2} = 1$. 

In order to compute the imaging functional $W_{\text{FF}}(z)$, we truncate the series representations(just as above) of the kernel functions for the operators $\mathcal{Q}$ and $\mathcal{R}$ as well as employ a standard $64$ point Riemann sum approximation for the integrals. This gives $64 \times 64$ discretization of the operators denoted ${\bf Q}$ and ${\bf R}$. To visualize the scatterer we let plot 
$$ W_{\text{FF}}(z) = \Big|\Big({\bf Q} {\bf N} {\bf Q}^\top {\bf R} \,  \phi_z, \phi_z \Big)_{\ell^2}\Big|^{p_1} \quad  \textrm{ with } \quad  \phi_z =[ \text{e}^{-\text{i}k z \cdot \hat{x}_1} , \cdots , \text{e}^{-\text{i}k z \cdot \hat{x}_{64}} ]^\top. $$ 
Here ${\bf N} = \big[ u^{s,\delta} (x_i , y_j) \big]_{i,j = 1}^{64}$ and $p_1$ is a positive parameter to sharpen the resolution of the image(see for e.g. \cite{Liu}). 

Now for the imaging functional $W_{\text{TDSM}}(z)$ we must construct the polynomial approximate $P_{\alpha,\ep}(t)$ for the filter function $\Gamma_{\alpha} (t)$ defined in the previous section. Here we proceed just as in \cite{DSMHarris} where ${\color{black} P_{\alpha,\ep}}(t)$ is constructed such that 
\[{\color{black} P_{\alpha,\ep}}(t) =    \sum\limits_{m=1}^{3} c_m t^m \quad \text{ such that } \quad P_{\alpha,\ep}(t_\ell) = \frac{ \sqrt{t_\ell}}{\alpha + t_\ell} \]
with $t_\ell$ is given by the 10 equally spaced point in the interval $[0 , \|{\bf Q} {\bf N} {\bf Q}^\top {\bf R} \|_2]$. In all our examples, we fix the parameter $\alpha = 10^{-3}$ since in-general we want the filter function to be an approximation of $1/\sqrt{t}$. Therefore, we have that the imaging functional $W_{\text{TDSM}}(z)$ can be numerically approximated using the singular value decomposition of ${\bf Q} {\bf N} {\bf Q}^\top {\bf R}$. Indeed, by following \cite{DSMHarris} we have that the discretization of the imaging functional is given by 
$$W_{\text{TDSM}}(z)=  \left| \sum\limits_{j=1}^{64} P^2_{\alpha,\ep}(s_j )  \big| ( {\bf v}_j , \phi_z )_{\ell^2} \big|^2 \right|^{p_2}$$
where $(s_j , {\bf v}_j ) \in \R_{> 0} \times \C^{64}$ are the singular values and right singular vectors of ${\bf Q} {\bf N} {\bf Q}^\top {\bf R}$. Again, $p_2$ is a positive parameter to sharpen the resolution. For the last imaging functional $W_{\text{CD}}(z)$, we approximate the integrals using a standard $64$ point Riemann sum approximation in each variable. In each case we normalize the imaging functionals to take $1$ as their maximal values. Therefore, we have that the imaging functionals should be approximately 1 in $D$ or on the boundary $\partial D$ and will take small values on the exterior of $D$.  \\

\noindent{\bf Example 1: Circular Scatterer} \\
For this example we plot the imaging functionals to recover a circle. Therefore, we have that the radial function describing the scatterer is given by  $r(\theta) =0.5 $. Here we take the wave number $k=4$ as well as the parameters $p1=p2=4$ and $\rho=8$. The dotted line represents the actual boundary of the circular obstacle in Figure \ref{recon1}.\\
\begin{figure}[H]
\hspace{-0.6in}{\includegraphics[width=1.2\linewidth]{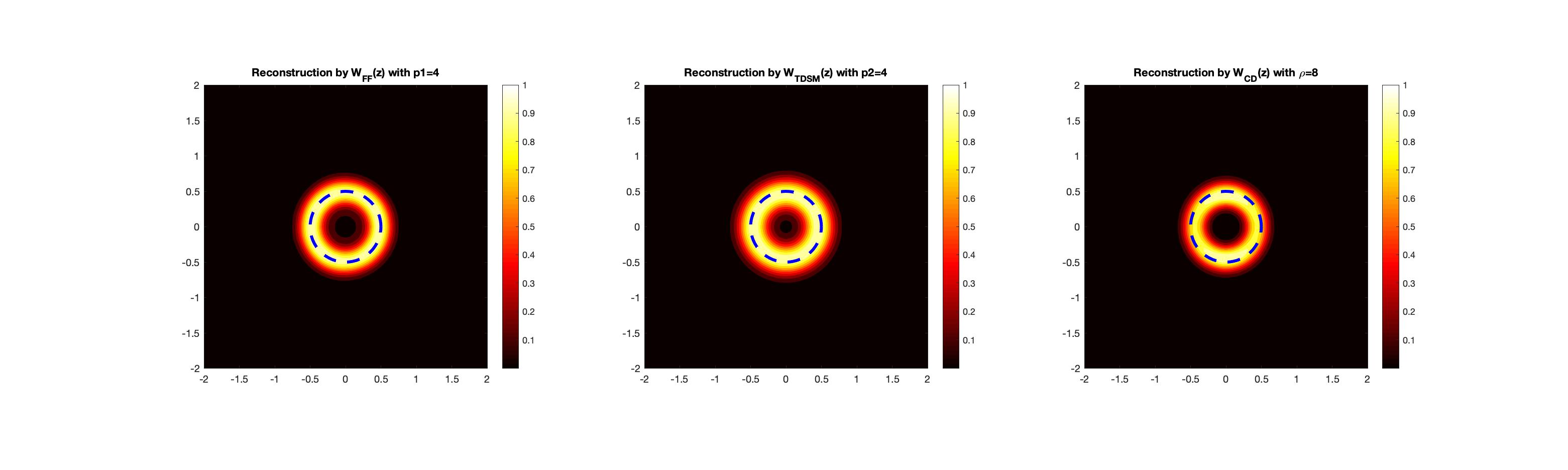}}  
\caption{The reconstruction of the circular obstacle by the three direct sampling imaging functionals. In this example, we take $\delta=0.05$ which corresponds to a 5$\%$ noise level.}
\label{recon1} 
\end{figure}

\noindent{\bf Example 2: Acorn-Shaped Scatterer} \\
Now we present a numerical example for recover an acorn-shaped obstacle. Here, the radial function describing the scatterer is given by  $$r(\theta) = 0.25\big(2+0.5\cos(3\theta)\big).$$ For this example, we again take the wave number $k=4$ as well as $p1=p2=4$ and $\rho=8$. The dotted line represents the actual boundary of the acorn-shaped obstacle in Figure \ref{recon2}.\\
\begin{figure}[H]
\hspace{-0.6in}{\includegraphics[width=1.2\linewidth]{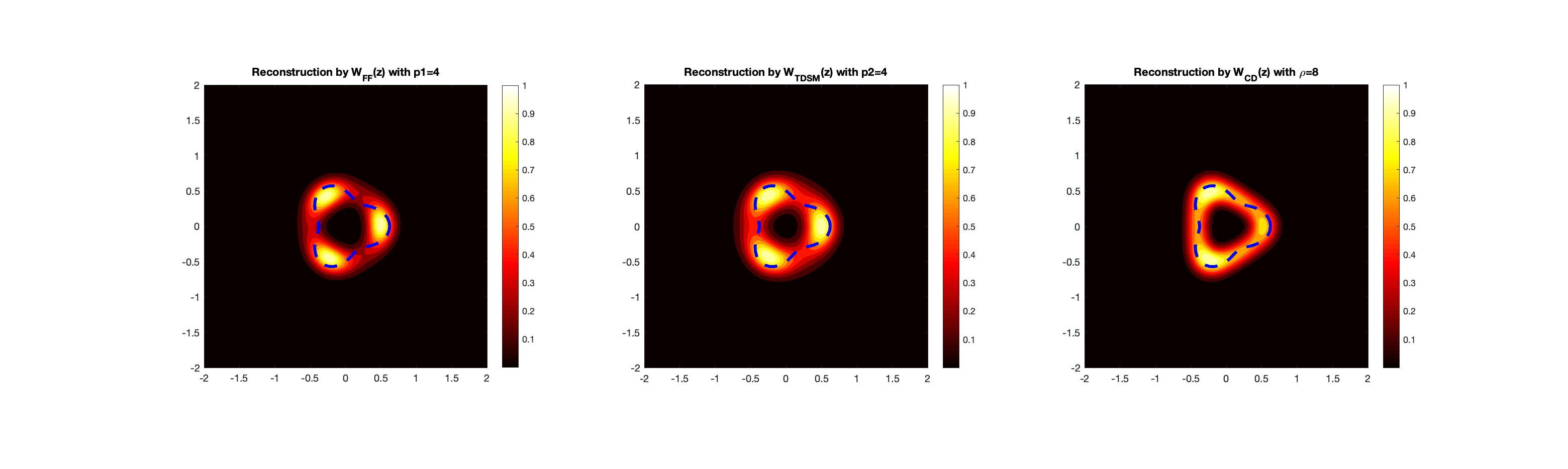}}  
\caption{The reconstruction of the acorn-shaped obstacle by the three direct sampling methods. In this example, we take $\delta=0.05$ which corresponds to a 5$\%$ noise level.}
\label{recon2} 
\end{figure}

\noindent{\bf Example 3: Flower-Shaped Scatterer} \\
Now we present a numerical reconstruction for a flower-shaped obstacle where the radial function is given by  
$$r(\theta) = 0.75\big(1-0.25\sin(4\theta)\big).$$ 
For this example we again take the wave number $k=4$ as well as $p1=p2=4$ and $\rho=8$. The dotted line represents the actual boundary of the flower-shaped obstacle in Figure \ref{recon3}.\\
\begin{figure}[H]
\hspace{-0.6in}{\includegraphics[width=1.2\linewidth]{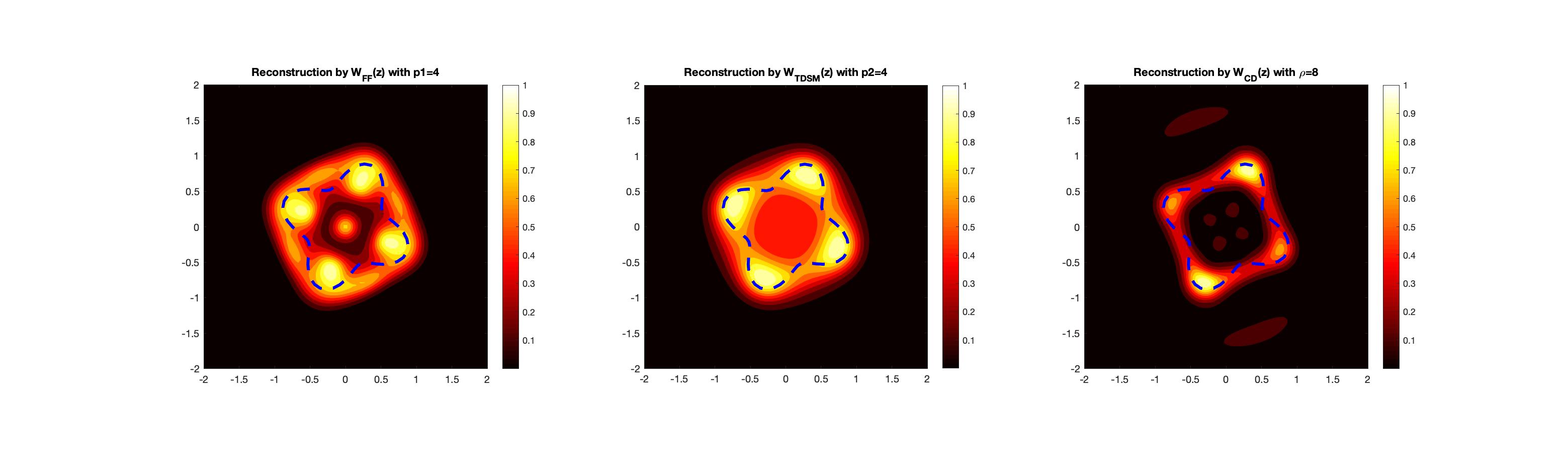}}  
\caption{The reconstruction of the flower-shaped obstacle by the three direct sampling methods. In this example, we take $\delta=0.05$ which corresponds to a 5$\%$ noise level.}
\label{recon3} 
\end{figure}

\noindent{\bf Example 4: Rounded-Square Scatterer} \\
Now we present a numerical reconstruction for a rounded-square shaped obstacle where the radial function is given by  
$$r(\theta) = 0.5 \Big( |\sin(\theta)|^{10} +  \frac{1}{10} |\cos(\theta)|^{10} \Big)^{-1/10}.$$ For this example, we again take the wave number $k=4$ as well as $p1=p2=4$ and $\rho=8$. The dotted line represents the actual boundary of the rounded-square in the figures. Here we provide the reconstruction for two different noises levels in  Figures \ref{recon4} and  \ref{recon5}. \\
\begin{figure}[ht]
\hspace{-0.6in}{\includegraphics[width=1.2\linewidth]{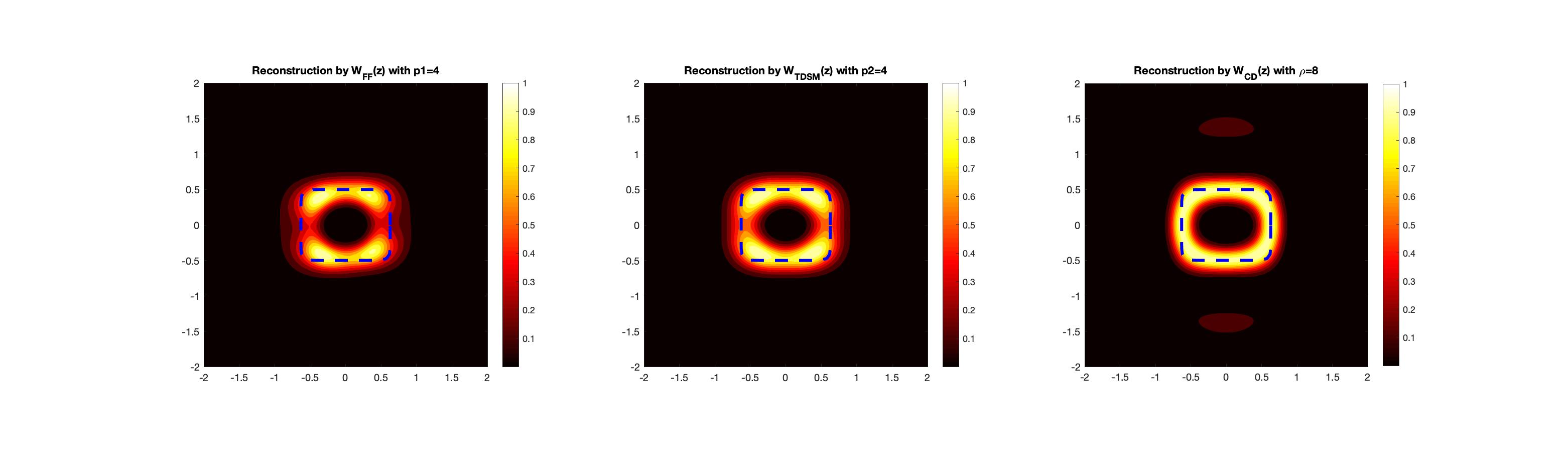}} 
\caption{The reconstruction of the rounded-square by the three direct sampling imaging functionals. In this example, we take $\delta=0.05$ which corresponds to a 5$\%$ noise level.}
\label{recon4} 
\end{figure}
\begin{figure}[ht]
\hspace{-0.6in}{\includegraphics[width=1.2\linewidth]{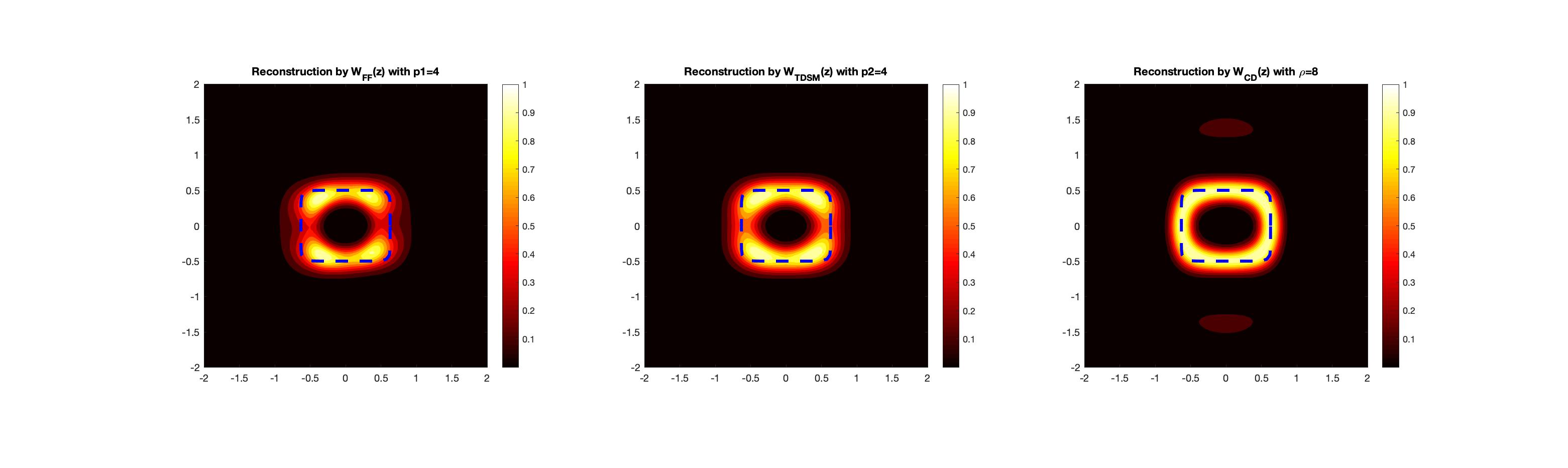}} 
\caption{The reconstruction of the rounded-square by the three direct sampling imaging functionals. In this example, we take $\delta=0.25$ which corresponds to a 25$\%$ noise level.}
\label{recon5} 
\end{figure}

In the presented examples we see that each imaging functional is stable with noise added to the data. This has been noticed in \cite{Liu} where large amounts of noise was added to the data which does not seem to affect the reconstruction much. As we see in Figures \ref{recon4} and \ref{recon5}, there is little to no difference in the reconstructions when the noise level is increased. We also see that the reconstructions are similar in many cases which is due to the fact that with the choice of parameters we have that the three imaging functionals should have the same decay rate as $\text{dist}(z,D) \to \infty$. Next, we present two examples with partial aperture data. Notice, the imaging functionals discussed here require full aperture data on the measurement surface $\Gamma$.\\

\noindent{\bf Example 5: Partial Aperture Data } \\
Here we provide two examples of reconstructing the rounded-square shaped obstacle with partial aperture data. Again, we will take the wave number $k=4$ as well as $p1=p2=4$ and $\rho=8$ where we use the imaging functionals with data only given on $3/4$ and $1/2$ of the measurement surface. \\
\begin{figure}[H]
\hspace{-0.6in}{\includegraphics[width=1.2\linewidth]{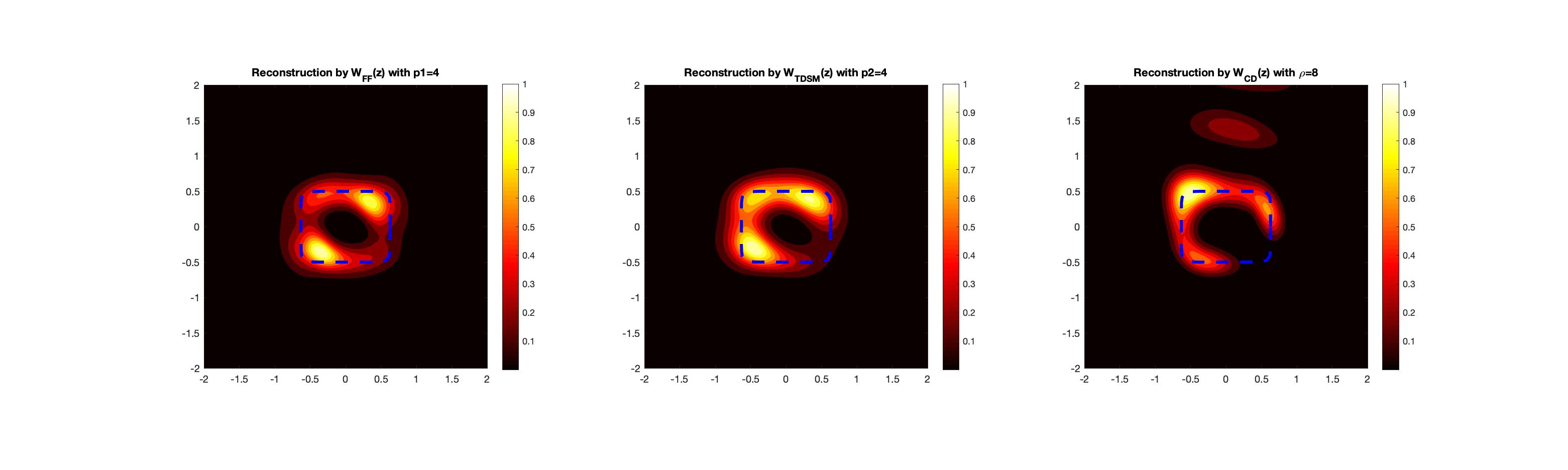}} 
\caption{The reconstruction of the rounded-square with $3/4$ partial aperture data i.e. measurements only taken on $\theta_j \in [0 , 3\pi/2 )$. In this example, we take $\delta=0.05$ which corresponds to a 5$\%$ noise level.}
\label{partialrecon1} 
\end{figure}
\begin{figure}[H]
\hspace{-0.6in}{\includegraphics[width=1.2\linewidth]{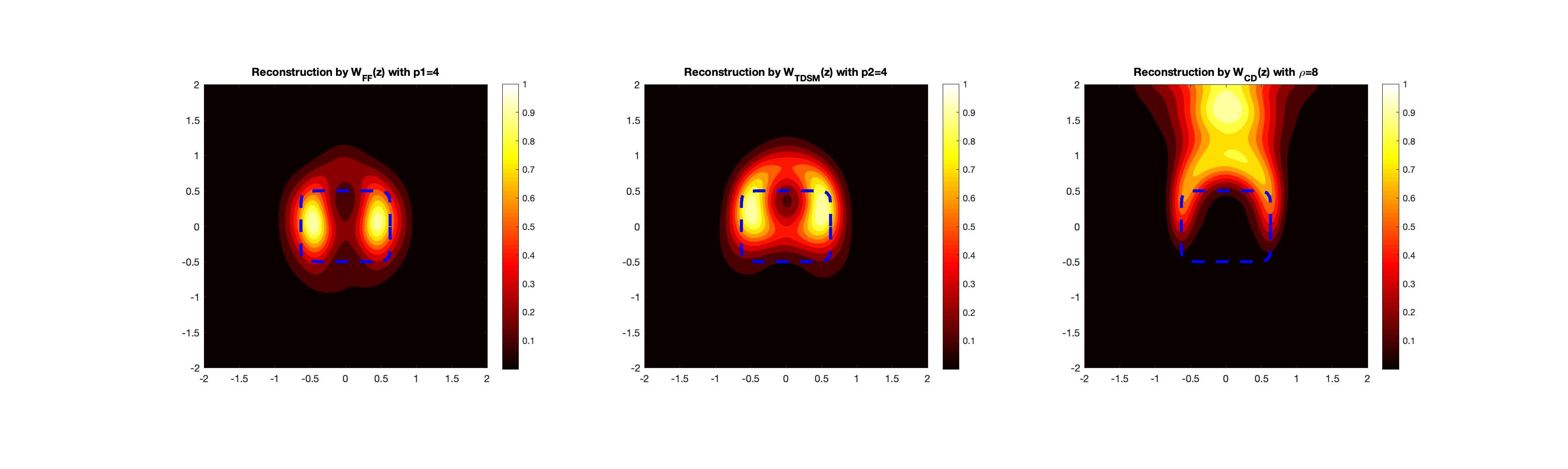}} 
\caption{The reconstruction of the rounded-square with $1/2$ partial aperture data i.e. measurements only taken on $\theta_j \in [0 , \pi )$. In this example, we take $\delta=0.05$ which corresponds to a 5$\%$ noise level.}
\label{partialrecon2} 
\end{figure}

In Figures \ref{partialrecon1} and \ref{partialrecon2} we see that $W_{\text{FF}}(z)$ and $W_{\text{TDSM}}(z)$ seem to give better reconstructions than $W_{\text{CD}}(z)$ for partial aperture data. Recently, in \cite{datacomppaper,LiuSun} some data completion methods are used to compute the missing scattering data which is then used by a qualitative method to recover the scatterer. Applying these data completion methods can possibly be employed to provide better reconstructions with partial aperture data. Also, to derive a theoretically valid estimate for the imaging functions with partial aperture data one should be able to use Theorem 4.1 in \cite{partial}.

\section{Conclusions}\label{end}
In this paper, we have developed new direct sampling methods for near-field measurements. We have focused on the case where the scatterer is a sound-soft obstacle but just as in \cite{postdocpaper} we see that these imaging functionals should work for other types of scatterers. This is one of the main advantages of direct/qualitative reconstruction methods but these methods do require full aperture data for their theoretical justification. Our numerical experiments seem to suggest that the imaging functionals derived by a far-field transformation provide reasonable results with partial aperture data. A direction that this research can progress is to develop theoretical justification for the resolution analysis for new direct sampling methods with partial aperture data. Also, just as in \cite{multifreq1,multifreq2} one can study the problem with multi-frequency data which can often help reduce the amount of sources and receivers. One would need to factorize the corresponding multi-frequency data operator as is done in \cite{FM-multifreq}. \\

\noindent{\bf Acknowledgments:} The research of I. Harris is partially supported by the NSF DMS Grant 2107891.


\end{document}